\newtheorem{theorem}{Theorem}
\newtheorem{definition}{Definition}
\newtheorem{lemma}{Lemma}
\newtheorem{remark}{Remark}
\def\P{{\mathbb P}}     
\def\E{{\mathbb E}}     
\def\ZZ{\mathbb{Z}}
\definecolor{Red}{rgb}{1,0,0}
\definecolor{Blue}{rgb}{0,0,1}
\def\red{\color{Red}}
\newcommand{\snote}[1]{{\red(S: #1)}}
\title{Impossibility of consistent distance estimation from sequence lengths under the TKF91 model}
\date{\today}
\author{
Wai-Tong Louis Fan\footnote{
Department of Mathematics, Indiana University, Bloomington.} 
\and 
Brandon Legried\footnote{
Department of Mathematics, University of Wisconsin--Madison.} 
\and 
Sebastien Roch\footnote{
Department of Mathematics, University of Wisconsin--Madison.}
}
\begin{document}

\maketitle

\begin{abstract}
    We consider the problem of distance estimation under the TKF91 model of sequence evolution by insertions, deletions and substitutions on a phylogeny. In an asymptotic regime where the expected sequence lengths tend to infinity, we show that no consistent distance estimation is possible from sequence lengths alone. More formally, we establish that the distributions of pairs of sequence lengths at different distances cannot be distinguished with probability going to one.  
\end{abstract}

\section{Introduction}

Phylogeny estimation consists in the inference of an evolutionary tree from extant species data, commonly molecular sequences (e.g.~DNA, amino acid).  A large body of theoretical work exists on the statistical properties of standard reconstruction methods~\cite{Steel:16,Warnow:17}.  Typically in such analyses, one assumes that sequences have evolved on a fixed rooted tree, from a common ancestor sequence to the leaf sequences, according to some Markovian stochastic process.  Often these processes model site substitutions exclusively, with the underlying assumption being that the data has been properly aligned in a pre-processing step.   In contrast, relatively little theoretical work has focused on models of insertions and deletions (indels) together with substitutions, in spite of the fact that such models have been around for some time~\cite{Thorne1991,thorne1992inching}.  See e.g.~\cite{THATTE200658,DaskalakisRoch:13,Allman2015StatisticallyCK,fan2017statistically}.

One extra piece of information available under indel models is the length of the sequence, which itself evolves according to a Markov process on the tree.  The notable work of Thatte~\cite{THATTE200658} shows that leaf sequence lengths \emph{alone} are in fact enough to reconstruct phylogenies, through a distance-based approach.  More specifically, it is shown in~\cite[(27)]{THATTE200658} that under the TKF91 model~\cite{Thorne1991} the expectation of the sequence length $N_v$ at a leaf $v$ conditioned on the sequence length $N_u$ at another leaf $u$ separated from $v$ by an amount of time $t_{uv}$ is
\begin{align}
\label{eq:thatte-m1}
\mathcal{N}_v(t)
= \bar{L} + \left(N_u - \bar{L}\right) 
e^{-\mu t_{uv} (1-\lambda/\mu)}
\end{align}
where $\bar{L} = \frac{\lambda/\mu}{1 - \lambda/\mu}$ is the expected length at stationarity, where $\lambda < \mu$ are the rates of insertion and deletion respectively.  (Full details on the TKF91 model are provided in Section~\ref{sec:defs}.)
Hence we see from~\eqref{eq:thatte-m1} that the full distribution of sequence lengths suffices to recover $\lambda/\mu$  
and all $\mu t_{uv}$'s. 

The tree topology can then be recovered using standard results about the metric properties of phylogenies~\cite{Steel:16}. That is, the tree is identifiable from the sequence lengths under the TKF91 model in the sense that two distinct tree topologies $T_1 \neq T_2$ necessarily produce distinct joint distributions of sequence lengths at the leaves.

It is also suggested in~\cite{THATTE200658}---without a full rigorous proof---that the scheme above could be used to reconstruct phylogenies from a single sample of sequence lengths at the leaves in the limit where $\lambda \nearrow \mu$. The latter asymptotics ensure that the expected sequence length at stationarity $\bar{L}$ diverges, and serves as a proxy for the amount of data growing to infinity.
However, in this short note, we show that no consistent distance estimator exists in this limit. Formally we establish that the distributions of pairs of sequence lengths at different distances cannot be distinguished with probability going to $1$ as $\lambda \nearrow \mu$.  Hence, while the tree is identifiable from the distribution of the sequence lengths at the leaves, one sample of the sequence lengths alone cannot be used in a distance-based approach of the type described above to reconstruct the tree consistently as $\lambda \nearrow \mu$.  On the technical side our proof follows by noting that, under the TKF91 model, the sequence length is (morally) a sum of independent random variables with finite variances, to which we apply a central limit theorem.  One complication is to obtain a limit theorem that is uniform in the parameter $\lambda/\mu$.  We expect that our techniques will be useful to analyze other bioinformatics methods under indel processes, for instance methods based on $k$-mer statistics (see e.g.~\cite{YangZhang:08,Haubold:13}).  Further intuition on our results is provided in Section~\ref{sec:results}. 

\bigskip

{\bf Organization.} The rest of the paper is organized as follows. The TKF91 model is reviewed in Section~\ref{sec:defs}.  Our main result, together with a proof sketch, is stated in Section~\ref{sec:results}.  Details of the proof are provided in Section~\ref{sec:proof}.

\section{Basic definitions}
\label{sec:defs}

In this section, we recall the TKF91 sequence evolution model~\cite{Thorne1991}. To simplify the presentation, we restrict ourselves to a two-state version of the model, as we will only require the underlying sequence-length process.
\begin{definition}[TKF91  model: two-state version]
\label{Def:BinaryIndel}
Consider the following Markov process $\mathcal{I} = \{\mathcal{I}_{t}\}_{t \geq 0}$ on the space $\mathcal{S}$ of binary digit sequences together with an \textbf{immortal link ``$\bullet$''}, that is,
\begin{equation*}\label{S}
		\mathcal{S} := ``\bullet" \otimes \bigcup_{M\geq 1} \{0,1\}^M,
\end{equation*}
where the notation above indicates that all sequences begin with the immortal link.  Positions of a sequence are called \textbf{sites}.  Let $(\nu,\lambda,\mu) \in (0,\infty)^{3}$ and $(\pi_0,\pi_1) \in [0,1]^2$ with $\pi_0 + \pi_1 = 1$ be given parameters.  The continuous-time dynamics are as follows:  If the current state is the sequence $\vec{x} \in \mathcal{S}$, then the following events occur independently:

\begin{itemize}
	\item \emph{Substitution:}  Each site except for the immortal link is substituted independently at rate $\nu > 0$.  When a substitution occurs, the corresponding digit is replaced by $0$ and $1$ with probabilities $\pi_0$ and $\pi_1$, respectively.
	\item \emph{Deletion:}  Each site except for the immortal link is removed independently at rate $\mu$.
	\item \emph{Insertion:}  Each site gives birth to a new digit independently at rate $\lambda$.  When a birth occurs, the new site is added immediately to the right of its parent site.  The newborn site has digit $0$ and $1$ with probabilities $\pi_0$ and $\pi_1$, respectively.
\end{itemize}
\end{definition}
\noindent This indel process is time-reversible with respect to the measure $\Pi$ given by
\begin{equation*}\label{Pi}
	\Pi(\vec{x})=
	\left(1-\frac{\lambda}{\mu}\right) 
	\left(\frac{\lambda}{\mu}\right)^M\prod_{i=1}^M\pi_{x_i} 
\end{equation*}
for each $\vec{x}=(x_1,x_2,\cdots,x_M)\in \{0,1\}^M$ where $M\geq 1$, and $\Pi(``\bullet") = \left(1-\frac{\lambda}{\mu}\right)$. We assume further that $\lambda < \mu$.  In that case, $\Pi$ is the stationary distribution of  $\mathcal{I}$.

We will be concerned with the underlying sequence-length process.
\begin{definition}[Sequence length]
The \textbf{length} of a sequence $\vec{x} = (\bullet,x_1,...,x_M)$ is defined as the number of sites except for the immortal link and is denoted by $|\vec{x}| = M$.
\end{definition}
 \noindent Under $\Pi$, the sequence-length process $|\mathcal{I}|$ is stationary and is geometrically distributed. 
Specifically the stationary distribution of the length process $|\mathcal{I}|$  is
\begin{equation}\label{gamma_M}
\gamma^{(\lambda)}_M:= \left(1-\frac{\lambda}{\mu}\right)\left(\frac{\lambda}{\mu}\right)^M,\quad M\in\ZZ_+.
\end{equation}

We are interested in this process on a rooted tree $T$.  Denote the index set by $\Gamma_{T}$.  The root vertex $\rho$ is assigned a state $\mathcal{I}_{\rho} \in \mathcal{S}$, drawn from stationary distribution on $\mathcal{S}$.  This state is then evolved down the tree according to the following recursive process.  Moving away from the root, along each edge $e = (u,v) \in E$, conditionally on the state $\mathcal{I}_{u}$, we run the indel process for a time $\ell_{(u,v)}$.  Denote by $\mathcal{I}_{t}$ the resulting state at $t \in e$.  
Then the full process is denoted by $\{\mathcal{I}_{t}\}_{t \in \Gamma_{T}}$.  
In particular, the set of leaf states is $\mathcal{I}_{\partial T} = \{\mathcal{I}_{v}:v \in \partial T\}$.

\bigskip

{\bf Setting. }
Throughout this paper, we let $\mathbb{P}_{\vec{x}}$ be the probability measure when the root state is $\vec{x}$.  If the root state is chosen according to a distribution $\nu$, then we denote the probability measure by $\mathbb{P}_{\nu}$.  We also denote  by $\mathbb{P}_{M}$ the conditional probability measure for the event that the root state has length $M$.

For our purposes, it will suffice to focus on the space $\mathcal{T}_{2}$ of star trees with two leaves that  have the same finite distance $h\in (0,\infty)$ from the root and are labeled as $\{1,2\}$. This distance $h$ is the height of the tree.
The indel process on a tree $T\in \mathcal{T}_{2}$ reduces to a pair of indel processes $(\mathcal{I}_{t}^1,\mathcal{I}_{t}^2)_{t\geq 0}$ that are independent upon conditioning on the root state $\mathcal{I}_{\rho}=\mathcal{I}_{0}^1=\mathcal{I}_{0}^2$.
We always assume the root state is chosen according to the equilibrium distribution $\Pi$. So the distribution of $(\mathcal{I}_{0}^1,\mathcal{I}_{0}^2)\in \mathcal{S} \times \mathcal{S}$ is 
\begin{equation*}\label{hatnu0}
\widehat{\nu}_0(\vec{x},\vec{y}) = \begin{cases}
\Pi(\vec{x}) & \text{ if } \vec{x} = \vec{y}, \\
0 & \textnormal{otherwise.}
\end{cases}
\end{equation*}

\section{Main result}
\label{sec:results}

Our main theorem is an impossibility result: the distributions of pairs of sequence lengths at different distances cannot be distinguished with probability going to $1$ as $\lambda \nearrow \mu$. Following~\cite{THATTE200658}, we consider the asymptotic regime where $\lambda \nearrow \mu$, which implies that the expected sequence length at stationarity tends to $+\infty$.
Recall that the total variation distance between two probability measures
$\tau_1$ and $\tau_2$ on a countable measure space $E$ is 
\begin{align*}
\|\tau_1 - \tau_2\|_{TV}
= \frac{1}{2} \sum_{\sigma \in E}
\left|\tau_1(\sigma)-\tau_2(\sigma)\right|.
\end{align*}
\begin{theorem}[Impossibility of distance estimation from sequence lengths]
\label{T:Length}
	Let $T^{1}$ and $T^{2}$ be two trees in $\mathcal{T}_2$ with heights $h_1 > h_2>0$ respectively.   For $i\in \{1,2\}$, we consider a TKF91 process on tree $T^i$ and let $\vec{N}^{(i)} = (N_{1}^{(i)},N_{2}^{(i)})\in\mathbb{Z}_{+}^2$ be the pair of sequence lengths at the leaves $\partial T^i$.
	Let
	\begin{equation*}
	    \mathcal{L}_{i} = \mathbb{P}_{\Pi}(\vec{N}^{(i)} \in \cdot)
	\end{equation*}
	be the distribution of $\vec{N}^{(i)}$ under $\mathbb{P}_{\Pi}$.  Then  for any fixed deletion rate $\mu \in (0,\infty)$,
	\begin{equation}
	\label{E:Length}
	   \limsup_{\lambda \nearrow \mu}\|\mathcal{L}_{1} - \mathcal{L}_{2}\|_{TV} < 1.
	\end{equation}
\end{theorem}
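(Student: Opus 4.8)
The plan is to lower bound the \emph{overlap} $1 - \|\mathcal{L}_1 - \mathcal{L}_2\|_{TV} = \sum_{\vec n}\min(\mathcal{L}_1(\vec n),\mathcal{L}_2(\vec n))$ away from $0$, uniformly as $\lambda\nearrow\mu$. Write $\epsilon := 1-\lambda/\mu$, so $\epsilon\searrow0$ in the regime of interest and $\bar L = (1-\epsilon)/\epsilon$. The first step is to condition on the root length. Since substitutions do not affect length and the birth/death rates depend only on the current number of sites (each of the $n$ mortal sites dies at rate $\mu$, while the $n$ mortal sites and the immortal link each give birth at rate $\lambda$), the length process is an autonomous birth--death chain on $\ZZ_+$. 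Hence, given the root length $N_0=M$, the two leaf lengths are i.i.d.\ with common law $q^{(i)}_M$, the law of the length after time $h_i$ started from $M$ sites; and since the root is at stationarity, $N_0\sim\gamma^{(\lambda)}$ for both trees. Writing $\mathcal{L}_i = \sum_M \gamma^{(\lambda)}_M\,(q^{(i)}_M\otimes q^{(i)}_M)$ and combining superadditivity of $\min$ over the mixture with the product bound $1-\|P\otimes P - Q\otimes Q\|_{TV}\ge(1-\|P-Q\|_{TV})^2$, I obtain
\[
1 - \|\mathcal{L}_1 - \mathcal{L}_2\|_{TV}\ \ge\ \E_{M\sim\gamma^{(\lambda)}}\Big[\big(1 - \|q^{(1)}_M - q^{(2)}_M\|_{TV}\big)^2\Big].
\]
It therefore suffices to bound the single-leaf overlap $1 - \|q^{(1)}_M - q^{(2)}_M\|_{TV}$ away from $0$ for the values $M\approx 1/\epsilon$ carrying almost all the $\gamma^{(\lambda)}$-mass.

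Next I would analyze $q^{(i)}_M$ through the branching decomposition of the length process: started from $M$ mortal sites, the length at time $h_i$ equals $\sum_{j=1}^M Z^{(i)}_j + I^{(i)}$, where the $Z^{(i)}_j$ are i.i.d.\ sizes at time $h_i$ of a linear birth--death process (birth rate $\lambda$, death rate $\mu$) from one individual, and $I^{(i)}$ is the independent contribution of the families founded by the immortal link. The classical birth--death formulas give, as $\epsilon\searrow0$, mean $\E Z^{(i)}_j = e^{-(\mu-\lambda)h_i}\to1$ and variance $\to 2\mu h_i$, with $Z^{(i)}_j$ converging in law to a fixed aperiodic $\ZZ_+$-valued distribution, while $I^{(i)}$ stays $O(1)$ in mean and variance. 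Thus, for $M\approx x/\epsilon$, the law $q^{(i)}_M$ is that of a sum of $M\to\infty$ i.i.d.\ terms (plus a bounded independent shift), with mean $m_i(M) = \bar L + (M-\bar L)e^{-(\mu-\lambda)h_i}$ and variance $v_i(M)\sim 2\mu h_i M$. Crucially, the two variances keep a \emph{fixed} ratio $v_1/v_2\to h_1/h_2\ne1$, whereas the means differ by only $m_1(M)-m_2(M) = (M-\bar L)\big(e^{-(\mu-\lambda)h_1}-e^{-(\mu-\lambda)h_2}\big) = O(1) = o(\sqrt{v_i})$.

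The heart of the argument, and the step I expect to be the main obstacle, is a \emph{local} central limit theorem for $q^{(i)}_M$ that is uniform in $\epsilon$: mere weak convergence of the rescaled length to a Gaussian is not enough to control the overlap of the two \emph{integer} laws $q^{(1)}_M,q^{(2)}_M$, since overlap is sensitive to the lattice behaviour at scale $\sqrt{v_i}$. I would prove that $\sqrt{v_i(M)}\,q^{(i)}_M\big(\lfloor m_i(M)+y\sqrt{v_i(M)}\rfloor\big)\to\phi(y)$ uniformly in $y$ as $\epsilon\searrow0$ with $\epsilon M\to x$, by controlling the characteristic function of the summand $Z^{(i)}_j$ uniformly in $\epsilon$ --- bounding it away from $1$ off a neighbourhood of the origin via uniform aperiodicity, and matching the Gaussian near the origin --- the term $I^{(i)}$ only smoothing the convolution. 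Granting this, $\sum_n\min(q^{(1)}_M(n),q^{(2)}_M(n))$ converges to $\int\min(g_1,g_2)$, the overlap of two centred Gaussians with variance ratio $h_1/h_2$ (the $O(1)$ mean gap being washed out at scale $\sqrt{v_i}$). That overlap equals $1-\|\mathcal{N}(0,h_1)-\mathcal{N}(0,h_2)\|_{TV}=:1-c$ with $c\in(0,1)$ depending only on $h_1/h_2$.

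Finally, since the integrand $\big(1-\|q^{(1)}_M-q^{(2)}_M\|_{TV}\big)^2\in[0,1]$ is bounded and, for $\epsilon M$ in any compact subset of $(0,\infty)$, converges to the positive constant $(1-c)^2$, while the rescaled geometric law $\epsilon N_0$ converges to an $\mathrm{Exp}(1)$ law placing negligible mass near $0$ and $\infty$, bounded convergence yields
\[
\liminf_{\lambda\nearrow\mu}\big(1 - \|\mathcal{L}_1 - \mathcal{L}_2\|_{TV}\big)\ \ge\ (1-c)^2\ >\ 0,
\]
which is exactly \eqref{E:Length}. The uniform local CLT is where all the work lies; everything else is soft (total-variation inequalities, the explicit birth--death moments, and the dominated passage to the limit).
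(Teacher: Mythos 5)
Your proposal is correct in outline and shares the paper's overall skeleton---condition on the root length $M \asymp 1/\epsilon$, decompose each leaf length into a sum of $M$ i.i.d.\ progeny variables plus an immortal-link term, and exploit the fact that the $O(1)$ mean gap between heights $h_1$ and $h_2$ is negligible at the $\sqrt{M}$ fluctuation scale---but it diverges from the paper at the decisive technical lemma. You propose a local CLT for the integer-valued laws $q^{(i)}_M$ that is \emph{uniform} in $\lambda$, proved via characteristic functions; the paper explicitly states that it could not find such a uniform local CLT in the literature and deliberately routes around it: it applies the off-the-shelf Berry--Ess\'een theorem to the first $M-1$ mortal links to show that each of $\Omega(\sqrt{M})$ constant-width intervals near the mean carries mass $\Omega(1/\sqrt{M})$ under both heights (Lemma~\ref{L:c_7c_8}), and then convolves with the progeny of the last mortal link to convert interval overlap into pointwise overlap at common integer values (Lemma~\ref{L:c_9}); summing over $\Omega(M)$ integer pairs then gives a constant lower bound on the overlap. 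So the ``main obstacle'' you flag is not incidental: it is exactly the step the authors judged unavailable and engineered around, and if you write your version up, proving that lemma \emph{is} the proof. That said, your sketch is viable: the progeny law has mass at $0$ and at $1$ bounded below uniformly in $\lambda \in [\lambda_*,1]$, so $|\phi_\lambda(\theta)|^2 \le 1 - 2 p_0 p_1 (1-\cos\theta)$ gives the uniform aperiodicity estimate away from the origin, and the uniform moment bounds (the paper's Lemma~\ref{L:moments}) control the Gaussian regime near the origin; the independent $O(1)$ immortal-link term indeed only smooths the convolution. Your route costs more work at the hard step but buys a sharper conclusion: the exact limiting overlap $1 - \|\mathcal{N}(0,h_1)-\mathcal{N}(0,h_2)\|_{TV}$, rather than an unspecified positive constant. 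Two smaller departures are both sound and arguably cleaner: you keep the immortal link as an independent additive term where the paper conditions on its having no descendants (an event of uniformly positive probability), and you package Step 1 via mixture-superadditivity of overlap plus the coupling inequality $1 - \|P \otimes P - Q \otimes Q\|_{TV} \ge (1-\|P-Q\|_{TV})^2$ in place of the paper's explicit double-sum manipulation; your final bounded-convergence step is also fine provided the local CLT is uniform over $\epsilon M$ in compact subsets of $(0,\infty)$, which your construction provides.
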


Recall that the total variation distance can be written as
\begin{align*}
\|\tau_1 - \tau_2\|_{TV}
= \sup_{A \subseteq E} \left|\tau_1(A) - \tau_2(A)\right|.
\end{align*}
So~\eqref{E:Length} implies that there is no test that can distinguish between $\mathcal{L}_{1}$ and $\mathcal{L}_{2}$ with probability going to $1$ as $\lambda \nearrow \mu$.

\bigskip

{\bf Proof idea. }
We first give a heuristic argument that underlies our formal proof.  Without loss of generality, assume that the deletion rate is $\mu = 1$.
The stationary length $M$ at the root is geometric with mean and standard deviation both of order $1/(1-\lambda)$. So we can think of the root length as roughly $M \approx C/(1-\lambda)$ with significant probability. Ignoring the small effect of the immortal link and conditioning on $M$, the lengths at the leaves are sums of independent random variables, specifically the progenies of the $M$ mortal links of the root. The mean and variance of these variables can be computed explicitly from continuous-time Markov chain theory (see~\eqref{betasigma} below; see also~\cite[(27), (31)]{THATTE200658}). As $\lambda \nearrow 1$, the difference in expectation between heights $h_1$ and $h_2$ turns out to be
\begin{align}
\label{eq:sketch1}
    M e^{-(1-\lambda)h_1} - M e^{-(1-\lambda)h_2}
    \approx \frac{C}{1-\lambda}[-(1-\lambda)h_1 + (1-\lambda) h_2] 
    \approx C(h_2 - h_1),
\end{align}
while the variance is of order
\begin{align}
\label{eq:sketch2}
    M \frac{e^{-(1-\lambda)h_i} (1- e^{-(1-\lambda)h_i})}{1-\lambda}
    \approx \frac{C}{1-\lambda} \frac{ (1-\lambda)h_i}{1-\lambda}
    \approx \frac{C h_i}{1-\lambda}.
\end{align}
The key observation is that the variance~\eqref{eq:sketch2} is $\gg$ than the square of the expectation difference~\eqref{eq:sketch1}. Hence, by the central limit theorem, one can expect significant overlap between the length distributions under $h_1$ and $h_2$, making them hard to distinguish even as $\lambda \nearrow 1$. We formalize this argument next. 

We observe that \eqref{E:Length} is  equivalent to 
\begin{equation}\label{S:c0}
  \liminf_{\lambda \nearrow 1}\sum_{\vec{y} \in \mathbb{Z}_{+}^2}\mathbb{P}_{\Pi}(\vec{N}^{(1)} = \vec{y}) \wedge \mathbb{P}_{\Pi}(\vec{N}^{(2)} = \vec{y}) > 0.  
\end{equation}
Indeed the total variation distance between two probability measures
$\tau_1$ and $\tau_2$ on a countable space $E$ can also be written as
\begin{align*}
\|\tau_1 - \tau_2\|_{TV}
= 1 - \sum_{\sigma \in E}\tau_1(\sigma)\land\tau_2(\sigma). 
\end{align*}
The rest of the proof is to establish \eqref{S:c0}.  It involves a series of steps:
\begin{enumerate}
    \item We first reduce the problem to a sum of independent random variables by conditioning on the root sequence length and ignoring the immortal link. In particular, we use the fact that there is a fairly uniform probability that $M$ is in an interval of size $1/(1-\lambda)$ around $1$. And we remove the effect of the immortal link by conditioning on its having no descendant, an event of positive probability.
    
    \item The central limit theorem (CLT) implies that there is a significant overlap between the two sums. More precisely, we need a local CLT (see e.g.~\cite{Durrett:10}) to derive the sort of pointwise lower bound needed in \eqref{S:c0}. However the bound we require must be uniform in $\lambda$ and we did not find in the literature a result of quite this form. Instead, we use an argument based on the Berry-Ess\'een theorem (again see e.g.~\cite{Durrett:10}). We first establish overlap over $\Omega(\sqrt{M})$ constant size intervals for the sum of the first $M-1$ mortal links, and then we use the final mortal link to match the probabilities on common point values under heights $h_1$ and $h_2$.
    
    \item Finally we bound the sum in \eqref{S:c0}. 
\end{enumerate}

\section{Proof}
\label{sec:proof}

In this section
we give the details of the proof of Theorem \ref{T:Length}.  We follow the steps described in the previous section. 

\bigskip

\noindent
{\bf Step 1. Reducing the problem to a sum of independent random variables. } We first show that  $\mathbb{P}_{\Pi}$ in \eqref{S:c0} can be replaced by $\P_M$ where $M$ is of the order of the expected sequence length $1/(1-\lambda)$ under $\Pi$. That is, we condition on the length of the ancestral sequence. After that we further ignore the progenies of the immortal link so that each leave sequence consists of i.i.d.~progenies of the $M$ sites in the ancestral  sequence.  These two simplifications are achieved in \eqref{S:eq0} and \eqref{S:eq2} below respectively. 

Precisely, for any $\lambda_* \in (0,1)$ and $0 < c_1 < 1 < c_2 < +\infty$, using~\eqref{gamma_M}
\begin{align}
&\liminf_{\lambda \nearrow 1}\sum_{\vec{y} \in \mathbb{Z}_{+}^2}\mathbb{P}_{\Pi}(\vec{N}^{(1)} = \vec{y}) \wedge \mathbb{P}_{\Pi}(\vec{N}^{(2)} = \vec{y}) \notag\\
&\geq  \inf_{\lambda \in (\lambda_*,1)}\sum_{\vec{y} \in \mathbb{Z}_{+}^2}\mathbb{P}_{\Pi}(\vec{N}^{(1)} = \vec{y}) \wedge \mathbb{P}_{\Pi}(\vec{N}^{(2)} = \vec{y}) \notag\\
&= \inf_{\lambda \in (\lambda_*,1)}\sum_{\vec{y} \in \mathbb{Z}_{+}^2}
\left[\sum_{M \in \mathbb{Z}_+} \gamma^{(\lambda)}_M \,\mathbb{P}_{M}(\vec{N}^{(1)} = \vec{y}) \right]\wedge 
\left[\sum_{M \in \mathbb{Z}_+} \gamma^{(\lambda)}_M \,\mathbb{P}_{M}(\vec{N}^{(2)} = \vec{y})\right] \notag\\
&= \inf_{\lambda \in (\lambda_*,1)}
\sum_{\vec{y} \in \mathbb{Z}_{+}^2}
\sum_{M \in \mathbb{Z}_+} (1-\lambda) \lambda^M \left[
\mathbb{P}_{M}(\vec{N}^{(1)} = \vec{y}) \wedge \mathbb{P}_{M}(\vec{N}^{(2)}= \vec{y})\right] \notag\\
&\geq \inf_{\lambda \in (\lambda_*,1)}
\sum_{M \in \left[\frac{c_1}{1-\lambda},\frac{c_2}{1-\lambda}\right]} (1-\lambda) \lambda^M
\sum_{\vec{y} \in \mathbb{Z}_{+}^2}
\mathbb{P}_{M}(\vec{N}^{(1)} = \vec{y}) \wedge \mathbb{P}_{M}(\vec{N}^{(2)}= \vec{y}) \notag\\
&\geq c_3 (c_2-c_1) 
\inf_{\lambda \in (\lambda_*,1)}
\inf_{M \in \left[\frac{c_1}{1-\lambda},\frac{c_2}{1-\lambda}\right]}
\sum_{\vec{y} \in \mathbb{Z}_{+}^2}
\mathbb{P}_{M}(\vec{N}^{(1)} = \vec{y}) \wedge \mathbb{P}_{M}(\vec{N}^{(2)}= \vec{y}),\label{S:eq0}
\end{align}
where $c_3$ is a lower bound on 
$\lambda^{M}$ for $M \in \left[\frac{c_1}{1-\lambda},\frac{c_2}{1-\lambda}\right]$ and $\lambda \in (\lambda_*,1)$.

\medskip

Let $\mathcal{Z}_0$ be the event that
the immortal link of the root sequence produces no mortal link in either leaf sequences. Let $\mathbb{P}_{M,\bullet}$ be
the probability conditioned on that event,
and  $c_4$ be a lower bound on the
probability of $\mathcal{Z}_0$ uniform
in $\lambda \in (\lambda_*,1)$. 
Under $\mathbb{P}_{M,\bullet}$, the two components of $\vec{N}^{(1)}$ are conditionally independent and each is a sum of $M$ i.i.d.~random variables corresponding to the progenies of mortal links. 
Hence \eqref{S:eq0} is at least
\begin{align}
&c_4 c_3 (c_2-c_1) 
\inf_{\lambda \in (\lambda_*,1)}
\inf_{M \in \left[\frac{c_1}{1-\lambda},\frac{c_2}{1-\lambda}\right]}
\sum_{\vec{y} \in \mathbb{Z}_{+}^2}
\mathbb{P}_{M,\bullet}(\vec{N}^{(1)} = \vec{y}) \wedge \mathbb{P}_{M,\bullet}(\vec{N}^{(2)}= \vec{y}) \nonumber\\
&\geq c_4 c_3 (c_2-c_1) 
\inf_{\lambda \in (\lambda_*,1)}
\inf_{M \in \left[\frac{c_1}{1-\lambda},\frac{c_2}{1-\lambda}\right]}
\sum_{\vec{y} \in \mathbb{Z}_{+}^2}
\left[p_{M,y_1}^{(\lambda)}(h_1) \,p_{M,y_2}^{(\lambda)}(h_1) \right] \wedge \left[p_{M,y_1}^{(\lambda)}(h_2) \,p_{M,y_2}^{(\lambda)}(h_2) \right].\label{S:eq2}
\end{align}
where  we
let $p_{y_1,y_2}^{(\lambda)}(t) = \mathbb{P}_{y_1,\bullet}(|\mathcal{I}_{t}| = y_2)$ be the transition probability of the length process \textit{without} the immortal link.

The sum in \eqref{S:eq2} leads us to study the overlap between the probability distributions $p_{M, \,\cdot}^{(\lambda)}(t):=\{p_{M, j}^{(\lambda)}(t)\}_{j\in\ZZ_+}$ for $t=h_1, h_2$ and $M\in  \left[\frac{c_1}{1-\lambda},\frac{c_2}{1-\lambda}\right]$.  The central limit theorem is what we need. However, because of our need for a bound that is uniform in $\lambda$,
we shall apply the Berry-Ess\'een theorem.
Specifically, we apply the latter 
bound to the progenies of the first $M-1$ mortal links of the root sequence. The idea is to show that $\Omega(\sqrt{M})$ summands in \eqref{S:eq2} have value $\Omega(1/\sqrt{M})$, for each
of $h_1$ and $h_2$ separately, and then use
the last mortal link to ``match'' all these values between $h_1$ and $h_2$.

\bigskip

\noindent
{\bf Step 2a. Establishing a uniform bound for $p_{M-1, \cdot}^{(\lambda)}(t)$. }
Note that $p_{M, \cdot}^{(\lambda)}(t)$ is the distribution of $S_{M}(t):=\sum_{i=1}^ML_t^i$, where $\{L^i_t\}_{i\geq 1}$ are i.i.d. random variables having the distribution of  the progeny length of a single mortal link at time $t>0$.  

Let the mean and the variance of $L^i_t$ be
\begin{equation}
\label{Def:betasigma}
\beta:=\beta(\lambda,t):=\E[L^i_t]\quad\text{and}\quad\sigma^2:=\sigma^2(\lambda,t):=\E|L^i_{t}-\beta|^2.
\end{equation}
As is expected,
{\it the distribution $p_{M, \,\cdot}^{(\lambda)}(t)$ is approximately Gaussian with mean $\beta M$ and variance $\sigma^2 M$}. 
We quantify this statement in the bound \eqref{BerryEsseen} below, after proving some moment bounds.
\begin{lemma}\label{L:moments}
Let $\beta(\lambda,t)$ and $\sigma(\lambda,t)$ be the mean and the standard deviation of $L^i_t$ defined in \eqref{Def:betasigma} and
consider the absolute third moment $\rho(\lambda,t) := E|L^i_{t}-\beta|^3$. For any $t\in (0,\infty)$,
\begin{equation}
\label{betasigma}
\beta(\lambda,t)=e^{-(1-\lambda)t}\quad\text{and}\quad \sigma^2(\lambda,t)=\frac{1+\lambda}{1-\lambda} e^{-(1-\lambda)t} (1- e^{-(1-\lambda)t}).
\end{equation}
Furthermore,
\[
0<\inf_{\lambda\in [\lambda_*,1]}\sigma(\lambda,t) < \sup_{\lambda\in [\lambda_*,1]}\sigma(\lambda,t) <\infty \quad\text{and}\quad 
\sup_{\lambda\in [\lambda_*,1]}\rho(\lambda,t) <\infty.
\]
\end{lemma}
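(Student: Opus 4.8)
The plan is to recognize the progeny of a single mortal link as a classical linear birth-death process and then read off its first three central moments from the explicit law. Under $\mathbb{P}_{y_1,\bullet}$ with $\mu = 1$, the single mortal link and each of its descendants dies independently at rate $1$ and gives birth independently at rate $\lambda$; hence $L^i_t$ is exactly the population size at time $t$ of a linear birth-death process started from one individual, with per-capita birth rate $\lambda$ and death rate $1$. The formulas in \eqref{betasigma} are then the textbook mean and variance of this process: writing $q := e^{-(1-\lambda)t} \in (0,1]$, one has $\beta = q$, and since $\lambda \neq \mu$ for $\lambda < 1$, $\sigma^2 = \frac{1+\lambda}{1-\lambda}\,q(1-q)$. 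I would state these as standard, rederiving them if desired from the probability generating function $F(s,t) = \mathbb{E}[s^{L^i_t}]$ of the linear birth-death process.

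For the uniform bounds on $\sigma$ the key point is that the near-critical limit $\lambda \nearrow 1$ is nonsingular once the diverging and vanishing factors are multiplied together. Setting $\epsilon = 1-\lambda$ and expanding $q = e^{-\epsilon t} = 1 - \epsilon t + O(\epsilon^2)$ gives $q(1-q) = \epsilon t + O(\epsilon^2)$, so $\sigma^2(\lambda,t) = \frac{1+\lambda}{1-\lambda}\,q(1-q) \to 2t$ as $\lambda \nearrow 1$. Thus $\sigma^2(\cdot,t)$ extends continuously to the compact interval $[\lambda_*,1]$ with boundary value $\sigma^2(1,t) = 2t > 0$, and it is strictly positive throughout (every factor is positive for $\lambda \in [\lambda_*,1)$, and the limit is $2t$). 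The extreme value theorem then yields $0 < \inf_{\lambda \in [\lambda_*,1]} \sigma < \sup_{\lambda \in [\lambda_*,1]} \sigma < \infty$, which is the first assertion.

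For the third absolute central moment I would use the explicit \emph{modified-geometric} law of $L^i_t$: $\mathbb{P}(L^i_t = 0) = p_0$ and $\mathbb{P}(L^i_t = n) = (1-p_0)(1-\xi)\,\xi^{n-1}$ for $n \geq 1$, where $\xi = \xi(\lambda,t) = \frac{\lambda(1-q)}{1-\lambda q} \in (0,1)$. The same expansion shows $\xi \to \frac{t}{1+t} < 1$ as $\lambda \nearrow 1$, so by continuity and compactness $\xi_{\max} := \sup_{\lambda \in [\lambda_*,1]} \xi(\lambda,t) < 1$. Since $\beta = q \leq 1$ and $|a-b|^3 \leq (a+b)^3 \leq 4(a^3+b^3)$ for $a,b \geq 0$, I bound
\[
\rho(\lambda,t) = \mathbb{E}\,|L^i_t - \beta|^3 \leq 4\big(\mathbb{E}[(L^i_t)^3] + 1\big) \leq 4\Big(\sum_{n\geq1} n^3\,\xi_{\max}^{\,n-1} + 1\Big) < \infty,
\]
uniformly in $\lambda \in [\lambda_*,1]$, using $\mathbb{E}[(L^i_t)^3] = (1-p_0)(1-\xi)\sum_{n\geq1} n^3 \xi^{n-1} \leq \sum_{n\geq1} n^3 \xi_{\max}^{\,n-1}$. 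This gives the final assertion.

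The only genuine obstacle is the apparent singularity of the formulas at the critical value $\lambda = 1$, where $\frac{1+\lambda}{1-\lambda}$ and $(1-\xi)^{-1}$-type factors diverge. The resolution, and the crux of the uniformity, is that the geometric tail parameter $\xi$ stays bounded away from $1$ all the way up to and including $\lambda = 1$; once this is established, every moment is controlled by a convergent series with ratio $\xi_{\max}$, and continuity on the compact parameter interval upgrades pointwise finiteness to uniform bounds. Everything else is a routine computation with the birth-death generating function.
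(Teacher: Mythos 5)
Your proposal is correct and follows essentially the same route as the paper's proof: the paper likewise quotes the known mean/variance formulas (citing Daskalakis--Roch), uses the explicit geometric-type law of $L^i_t$ from Thorne et al.\ with tail ratio $\lambda\eta(\lambda,t)$ (your $\xi$), and establishes uniformity by showing continuity of $\eta$ and $\sigma^2$ up to $\lambda=1$ (via L'Hospital, matching your Taylor expansion) with limits $t/(1+t)<1$ and $2t>0$ on the compact interval $[\lambda_*,1]$. Your write-up merely makes explicit the moment-series estimate that the paper summarizes in one sentence.
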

\begin{proof}
For~\eqref{betasigma}, see e.g.~\cite[(3), (4)]{DaskalakisRoch:13}.

Moreover, from \cite[(8)--(10)]{Thorne1991}, 
the probability that a normal link as $n$ 
 descendants including itself is 
	\begin{equation*}
	\P(L^i_t=n)=
	\begin{cases}
     (1-\eta(\lambda, t))(1-\lambda\eta(t))[\lambda\eta(\lambda, t)]^{n-1} \qquad &\text{for }n\ge 1 \\
	 \eta(\lambda, t) \qquad &\text{for }n=0
	\end{cases},
	\end{equation*}
where 
$\eta(\lambda, t) =
\frac{1- e^{-(1-\lambda)t}}{1-\lambda e^{-(1-\lambda)t}}$. It can be seen from L'Hospital's rule that $\eta(\lambda, t)$ is continuous as a function of $\lambda$ around $1$ and that $\eta(\lambda, t) = \frac{t}{1+t} + O(|1-\lambda|)$ as $\lambda \to 1$.  From this explicit formula for the probability mass function of $L^i_t$, which we note is a geometric sequence, it follows that all moments of $L^i_t$ are  bounded from above uniformly in $\lambda\in [\lambda_*,1]$. 

To show that the variance is bounded from below uniformly in $\lambda\in [\lambda_*,1]$, we note (again using L'Hospital's rule) that $
\sigma^2(\lambda,t)$ is continuous in $\lambda$ around $1$, strictly positive and tends to $2t$ as $\lambda \to 1$. 
Hence the variance is bounded from below, uniformly in $\lambda\in [\lambda_*,1]$
\end{proof}
Let $F_{M}^{(\lambda)}(t)$ be the cumulative distribution function (CDF) of the probability distribution $p_{M, \cdot}^{(\lambda)}(t)$. That is, 
\[F_{M}^{(\lambda)}(t)(x)=\sum_{j:j\leq x}p_{M, j}^{(\lambda)}(t)=\P(S_{M}(t)\leq x).\]
\begin{lemma}[Uniform bound for $p_{M-1, \cdot}^{(\lambda)}(t)$]
For each $t>0$, there exists a constant $C>0$ such that
\begin{equation}\label{BerryEsseen}
    \sup_{\lambda\in[\lambda_*,1]}\sup_{x \in \mathbb{R}}\Big|F_{M}^{(\lambda)}(t)\Big(M\beta(\lambda,t)\,+\,x\,\sigma(\lambda,t)\,\sqrt{M}\Big) - \mathcal{N}(x)\Big| \leq \frac{C}{\sqrt{M}},
\end{equation}
for all $M\in Z_+$,
where $\mathcal{N}$ is the CDF of the standard normal distribution.
\end{lemma}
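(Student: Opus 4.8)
The plan is to recognize the quantity inside \eqref{BerryEsseen} as exactly the Kolmogorov distance between the standardized law of $S_M(t)$ and the standard normal, and then to invoke the classical Berry--Ess\'een theorem, with all the uniformity in $\lambda$ supplied directly by Lemma~\ref{L:moments}. Since the heavy lifting (uniform control of the moments) was done there, the remaining argument is essentially a one-step reduction plus bookkeeping.

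First I would rewrite the argument of $F_M^{(\lambda)}(t)$ in standardized form. Because $F_M^{(\lambda)}(t)$ is the CDF of $S_M(t) = \sum_{i=1}^M L_t^i$, a sum of $M$ i.i.d.\ copies of $L_t^i$ with mean $\beta = \beta(\lambda,t)$ and variance $\sigma^2 = \sigma^2(\lambda,t)$ as in \eqref{Def:betasigma}, we have
\[
F_M^{(\lambda)}(t)\big(M\beta + x\sigma\sqrt{M}\big) = \P\big(S_M(t) \le M\beta + x\sigma\sqrt{M}\big) = \P\!\left(\frac{S_M(t) - M\beta}{\sigma\sqrt{M}} \le x\right),
\]
so that the supremum over $x$ in \eqref{BerryEsseen} is precisely the Kolmogorov distance between the law of the normalized sum and $\mathcal{N}$.

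Next I would apply the Berry--Ess\'een theorem (see e.g.~\cite{Durrett:10}): for i.i.d.\ summands with finite absolute third moment $\rho = \rho(\lambda,t) = \E|L_t^i-\beta|^3$, there is an absolute constant $C_0$ such that
\[
\sup_{x\in\RR}\left|\P\!\left(\frac{S_M(t)-M\beta}{\sigma\sqrt{M}}\le x\right) - \mathcal{N}(x)\right| \le \frac{C_0\,\rho(\lambda,t)}{\sigma(\lambda,t)^3\,\sqrt{M}},
\]
valid for each fixed $\lambda \in [\lambda_*,1]$ and all $M \in \ZZ_+$ with one and the same $C_0$. This is where finiteness of $\rho$, guaranteed by the geometric form of the PMF of $L_t^i$ in Lemma~\ref{L:moments}, is used.

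Finally, the uniformity in $\lambda$ is the only point needing care, and it is exactly what Lemma~\ref{L:moments} delivers through \eqref{betasigma} and its continuity/L'Hospital consequences: since $\inf_{\lambda\in[\lambda_*,1]}\sigma(\lambda,t) > 0$ and $\sup_{\lambda\in[\lambda_*,1]}\rho(\lambda,t) < \infty$, the Berry--Ess\'een prefactor satisfies $C := C_0 \sup_{\lambda\in[\lambda_*,1]} \rho(\lambda,t)/\sigma(\lambda,t)^3 < \infty$, a finite constant depending only on $t$. Taking $\sup_{\lambda\in[\lambda_*,1]}$ on both sides of the displayed bound then yields \eqref{BerryEsseen}. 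The only conceptual obstacle is the potential degeneration of the summand distribution as $\lambda \nearrow 1$, which would blow up the ratio $\rho/\sigma^3$; but this has already been ruled out in Lemma~\ref{L:moments}, where $\sigma^2(\lambda,t) \to 2t > 0$ keeps the variance bounded away from zero uniformly. Hence no further estimate is required and the proof reduces to assembling these ingredients.
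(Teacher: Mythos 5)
Your proposal is correct and matches the paper's own proof essentially verbatim: both reduce the statement to the Kolmogorov distance for the standardized i.i.d.\ sum, invoke the Berry--Ess\'een theorem with bound $C_0\rho/(\sigma^3\sqrt{M})$, and then obtain uniformity in $\lambda$ from the moment bounds of Lemma~\ref{L:moments} (namely $\inf_\lambda \sigma > 0$ and $\sup_\lambda \rho < \infty$). Nothing further is needed.
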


\begin{proof}
Since $\beta(\lambda,t), \sigma^2(\lambda,t), \rho(\lambda,t) \in (0,\infty)$, the Berry-Ess\'een theorem (as stated e.g.~in~\cite{Durrett:10}) applies and asserts that  
\[ \sup_{x \in \mathbb{R}}\left|\P\left(\frac{S_{M-1} - (M-1)\beta(\lambda,t)}{\sigma(\lambda,t)\sqrt{M-1}}\leq x\right) \,-\, \mathcal{N}(x) \right| \leq \frac{3\rho(\lambda,t)}{\sigma^3(\lambda,t) \sqrt{M-1}}\] 
for all $\lambda\in[0,1]$. By Lemma \ref{L:moments}, 
for each $t>0$, the right hand side is bounded from above uniformly for $\lambda\in[\lambda_*,1)$.
\end{proof}

\bigskip

\noindent
{\bf Step 2b. Controlling the overlap of $p_{M-1, \cdot}^{(\lambda)}(h_1)$ and $p_{M-1, \cdot}^{(\lambda)}(h_2)$ in \eqref{S:eq2}. } To quantify the overlap between $p_{M-1, \cdot}^{(\lambda)}(h_1)$ and $p_{M-1, \cdot}^{(\lambda)}(h_2)$, we first compare their expectations.
From the formula of $\beta$ in \eqref{betasigma}, we have
\begin{equation*}\label{mean_diff}
    \beta(\lambda,h_1) - \beta(\lambda,h_2) 
\leq (1-\lambda)(h_1 - h_2)
\end{equation*}
and so,
for $M \in \left[\frac{c_1}{1-\lambda},\frac{c_2}{1-\lambda}\right]$, the means of  $S_{M-1}$ for $h_1$ and $h_2$ are close in the sense that
\begin{equation}\label{mean_diff2}
\beta(\lambda,h_1)(M-1) - \beta(\lambda,h_2)(M-1)
\leq c_6 
\end{equation}
for some $c_{6} > 0$ not depending on $\lambda$. 

Now consider the interval with length roughly the standard deviation  and centered at around one of the means, $\beta(\lambda,h_1)(M-1)$. Then  consider an equi-partition of this interval  into roughly $\sqrt{M-1}$ many pieces of constant length. Precisely, we write $\beta_1:=\beta(\lambda,h_1)$ and $\sigma_1:=\sigma(\lambda,h_1)$ for simplicity. Then
for an arbitrary constant $K > 0$,
\[
 \Big(\beta_1(M-1)
 -\sigma_1\sqrt{M-1},\; \beta_1(M-1)+\sigma_1\sqrt{M-1} \Big)
 =\bigcup_{r\in \Lambda^M_K}\mathcal{J}^M_r(c)
\]
where the sub-intervals 
$$
\mathcal{J}^M_r(K):= \big(\beta_1(M-1)
+ r\sigma_1 K ,\; \beta_1(M-1)
+ (r+1) \sigma_1 K  \big)
$$
have constant width $\sigma_1 K$
for $r \in\Lambda^M_K$,  and
\[
\Lambda^M_K:=\left\{-\sigma_1 \sqrt{M-1},\, -\sigma_1 (\sqrt{M-1}-K)\,\ldots, 0,\,\sigma_1 K,\,\ldots, \sigma_1(\sqrt{M-1}-K)\right\}.
\]

 \begin{figure}[t] \centering
	\begin{tikzpicture}[>=latex,scale=0.6]
	\draw[->] (0,0)--(10.5,0);
	\draw[->] (0,0)--(0,8.5);
	\draw(11.2,0)node{\large$\mathbb{Z}_+$};
	\draw(0, 9)node{\large$\mathbb{Z}_+$};
	\draw(6.5,-0.4)node{$M-1$};
	

	\draw[line width=1pt](0,0)--(10.5, 8.5);
	\draw[dashed](0,0)--(10.5, 7.5);

\draw[line width=1pt](6.5,2)--(6.5,8.6); 
\shade[ball color=black](6.5,5.3)circle(1.2mm);

\shade[ball color=black](6.5,2)circle(1.2mm);
\shade[ball color=black](6.5, 3.65)circle(1.2mm);

\shade[ball color=black](6.5,8.6)circle(1.2mm);
\shade[ball color=black](6.5,6.95)circle(1.2mm);
	\end{tikzpicture}
	\caption{\small The solid straight line $j= \beta_1 M$ has slope $\beta_1$ and the dotted line $j= \beta_2 M$ has slope $\beta_2$ where $\beta_i=\beta(\lambda,h_i)$ for $i=1,2$. 
	The vertical line has length $2\sigma_1\sqrt{M-1}$ where $\sigma_1=\sigma(\lambda,h_1)$ and represents the union of sub-intervals $\bigcup_{r\in \Lambda^M_K}\mathcal{J}^M_r(c)$. 
	Lemma \ref{L:c_7c_8} says that for each $M \in \left[\frac{c_1}{1-\lambda},\frac{c_2}{1-\lambda}\right]$, 
	both probability measures  $p_{M-1, \,\cdot}^{(\lambda)}(h_1)$ and $p_{M-1,\,\cdot}^{(\lambda)}(h_2)$ have mass at least $c_8/\sqrt{M-1}$ on  $\mathcal{J}^M_r(c)$, uniformly for all  $r\in \Lambda^M_K$ and $\lambda\in [\lambda_*,1)$.
	}
	\label{fig:overlap} 
\end{figure}
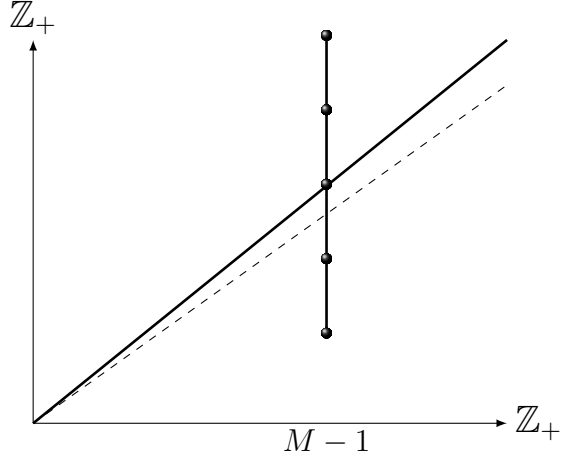

Lemma \ref{L:c_7c_8} below says that there exists a constants $c=c_7$ and $K$ large enough
(depending on $c_5$ and $c_6$ but not on $\lambda$) such that
each of these intervals contains
mass at least $\frac{c_8}{\sqrt{M-1}}$
under both probability distributions $p_{M-1, \,\cdot}^{(\lambda)}(h_1)$
and $p_{M-1, \,\cdot}^{(\lambda)}(h_2)$. See Figure \ref{fig:overlap}. Write $p_{M-1, \,A}^{(\lambda)}(t)=\sum_{j\in A}p_{M-1, \,j}^{(\lambda)}(t)$ for simplicity.
\begin{lemma}\label{L:c_7c_8}
There exist positive constants  $c_7,c_8$ such that, with $\mathcal{J}_{r}=\mathcal{J}^M_{r}(c_7)$ and $\Lambda^M=\Lambda^M_{c_7}$,
\[p_{M-1, \,\mathcal{J}_{r}}^{(\lambda)}(h_1) \wedge p_{M-1, \,\mathcal{J}_{r}}^{(\lambda)}(h_2) \geq \frac{c_8}{\sqrt{M-1}}\]
for all $r \in\Lambda^M$,  $M \in \left[\frac{c_1}{1-\lambda},\frac{c_2}{1-\lambda}\right]$ and $\lambda\in[\lambda_*,1)$.
\end{lemma}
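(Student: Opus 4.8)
The plan is to derive both pointwise lower bounds from the uniform Berry--Ess\'en estimate \eqref{BerryEsseen}, applied separately to the sums $S_{M-1}(h_1)$ and $S_{M-1}(h_2)$, by pushing each interval $\mathcal{J}_r$ into the standardized coordinate of the relevant distribution and comparing its mass to the corresponding Gaussian mass. Write $\varphi(x)=\frac{1}{\sqrt{2\pi}}e^{-x^2/2}=\mathcal{N}'(x)$ for the standard normal density, and recall from Lemma~\ref{L:moments} that $\sigma_i:=\sigma(\lambda,h_i)$ satisfies $0<\sigma_{\min}\le \sigma_i\le \sigma_{\max}<\infty$ uniformly in $\lambda\in[\lambda_*,1)$, so the ratio $\sigma_1/\sigma_2$ lies in $[r_0,R]$ with $r_0:=\sigma_{\min}/\sigma_{\max}>0$ and $R:=\sigma_{\max}/\sigma_{\min}<\infty$. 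Throughout, $C$ is the constant from \eqref{BerryEsseen} and I treat $M\ge 2$, which is automatic in the regime $M\ge c_1/(1-\lambda)$ once $\lambda_*$ is taken close enough to $1$.

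First I would handle $p_{M-1,\cdot}^{(\lambda)}(h_1)$, the easy half. In the $h_1$-standardized coordinate $x=(j-\beta_1(M-1))/(\sigma_1\sqrt{M-1})$, the very definitions of $\mathcal{J}^M_r(K)$ and $\Lambda^M_K$ make each $\mathcal{J}_r$ an interval of width $K/\sqrt{M-1}$ contained in $[-1,1]$, with standardized endpoints $x_r^\pm$. Hence by \eqref{BerryEsseen} applied to $S_{M-1}(h_1)$,
\[
p_{M-1,\mathcal{J}_r}^{(\lambda)}(h_1)\ \ge\ \big(\mathcal{N}(x_r^+)-\mathcal{N}(x_r^-)\big)-\frac{2C}{\sqrt{M-1}}\ \ge\ \frac{K\,\varphi(1)}{\sqrt{M-1}}-\frac{2C}{\sqrt{M-1}},
\]
where I bounded the Gaussian mass below by $(\text{width})\times\min_{[-1,1]}\varphi=(K/\sqrt{M-1})\varphi(1)$. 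Taking $K$ with $K\varphi(1)>2C$ makes the right-hand side a positive constant over $\sqrt{M-1}$, uniformly in $\lambda,M,r$.

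The main obstacle is the $h_2$-distribution, since the intervals $\mathcal{J}_r$ are built from the $h_1$-parameters $\beta_1,\sigma_1$ yet must still capture $\Omega(1/\sqrt{M-1})$ mass under $p_{M-1,\cdot}^{(\lambda)}(h_2)$. The affine change from the $h_1$- to the $h_2$-standardized coordinate is
\[
x_{h_2}=\frac{(\beta_1-\beta_2)(M-1)}{\sigma_2\sqrt{M-1}}+\frac{\sigma_1}{\sigma_2}\,x_{h_1}.
\]
The first term is controlled by the mean-closeness bound \eqref{mean_diff2}, namely $|\beta_1-\beta_2|(M-1)\le c_6$, so its magnitude is at most $c_6/(\sigma_{\min}\sqrt{M-1})$ and hence bounded; the second has $|x_{h_1}|\le 1$ on $\mathcal{J}_r$ and $\sigma_1/\sigma_2\le R$. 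Therefore the $h_2$-image of every $\mathcal{J}_r$ lies in a fixed window $[-B,B]$ with $B$ \emph{independent of $K$}. This independence is the crucial point: it holds precisely because each $\mathcal{J}_r$ sits inside the one–standard–deviation window $[-1,1]$ in $h_1$-coordinates, whose image is bounded regardless of how finely we partition it. The image also has width $(\sigma_1/\sigma_2)(K/\sqrt{M-1})\ge r_0K/\sqrt{M-1}$, so applying \eqref{BerryEsseen} to $S_{M-1}(h_2)$ and bounding the Gaussian mass below by $(\text{width})\times\min_{[-B,B]}\varphi=(r_0K/\sqrt{M-1})\varphi(B)$ gives
\[
p_{M-1,\mathcal{J}_r}^{(\lambda)}(h_2)\ \ge\ \frac{r_0K\,\varphi(B)-2C}{\sqrt{M-1}}.
\]
Because $B$ is fixed, $\varphi(B)>0$ is fixed, and I can again choose $K$ large enough that $r_0K\varphi(B)>2C$.

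Finally I would set $c_7:=K$ to be the larger of the two thresholds and let $c_8>0$ be the minimum of $K\varphi(1)-2C$ and $r_0K\varphi(B)-2C$; taking the minimum of the two displayed bounds yields $p_{M-1,\mathcal{J}_r}^{(\lambda)}(h_1)\wedge p_{M-1,\mathcal{J}_r}^{(\lambda)}(h_2)\ge c_8/\sqrt{M-1}$ uniformly over $r\in\Lambda^M$, $M\in[c_1/(1-\lambda),c_2/(1-\lambda)]$, and $\lambda\in[\lambda_*,1)$, as required. Two routine points remain: that the openness of $\mathcal{J}_r$ and the integrality of $S_{M-1}$ (the endpoints are generically non-integers) do not affect the CDF differences, and that $M\ge 2$ holds throughout; both are harmless.
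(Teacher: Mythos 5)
Your proposal is correct and follows essentially the same route as the paper's own proof: apply the uniform Berry--Ess\'een bound \eqref{BerryEsseen} separately to $S_{M-1}(h_1)$ and $S_{M-1}(h_2)$, lower-bound the Gaussian mass of each sub-interval by its width times the minimum of the normal density over a fixed window, and then choose the partition width $K=c_7$ large enough that this mass dominates the $O(1/\sqrt{M-1})$ Berry--Ess\'een error. Your explicit treatment of the $h_2$ case---the affine change of standardized coordinates, the mean-shift bound from \eqref{mean_diff2}, the variance-ratio bounds from Lemma~\ref{L:moments}, and the key observation that the window $[-B,B]$ is independent of $K$---is exactly the content the paper compresses into ``by the same argument and using \eqref{mean_diff2},'' so you have supplied a fleshed-out version of the same argument rather than a different one.
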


\begin{proof}
The Berry-Ess\'een theorem \eqref{BerryEsseen} implies that
\[\sup_{r\in\Lambda^M}\left|p_{M-1, \,\mathcal{J}_{r}}^{(\lambda)}(h_1) - \int_{\widetilde{\mathcal{J}_{r}}}\frac{1}{\sqrt{2\pi}}e^{-\frac{x^2}{2}}\,dx \right| \leq \frac{6\rho}{\sigma^3 \sqrt{M-1}}\]
for all $\lambda\in[\lambda_*,1]$ and $M\geq 2$, 
where
\[
\widetilde{\mathcal{J}_{r}}:=\left(\frac{rK}{\sqrt{M-1}},\,\frac{(r+1)K}{\sqrt{M-1}}  \right).
\]
Then $\{\widetilde{\mathcal{J}_{r}}\}_{r\in\Lambda^M}$ is roughly an equi-partition of the interval $(-1,1)$ into $\frac{2\sqrt{M-1}}{K}$ sub-intervals of length $\frac{K}{\sqrt{M-1}}$. Furthermore,
$$\int_{\widetilde{\mathcal{J}_{r}}}\frac{1}{\sqrt{2\pi}}e^{-\frac{x^2}{2}}\,dx \geq \frac{K}{\sqrt{M-1}}\,\frac{e^{-1/2}}{\sqrt{2\pi}}.$$

Pick $K$ large enough (call it $c_7$), we obtain from the first display in this proof that

\[  \inf_{r \in\Lambda^M,\,\lambda\in[\lambda_*,1)} p_{M-1, \,\mathcal{J}_{r}}^{(\lambda)}(h_1) \geq \frac{c}{\sqrt{M-1}}\] 
for some constant $c > 0$ that does not depend on $M$. By the same argument and using \eqref{mean_diff2}, we have
\[ \inf_{r \in\Lambda^M,\,\lambda\in[\lambda_*,1)} p_{M-1, \,\mathcal{J}_{r}}^{(\lambda)}(h_2) \geq \frac{c'}{\sqrt{M-1}}\] 
for some constant $c'>0$ that does not depend on $M$,
even though $\mathcal{J}_{r}$ is constructed using $h_1$. The proof is complete by taking $c_8=\min\{c,c'\}$.
\end{proof}

\bigskip

\noindent
{\bf Step 2c. Matching $p_{M, \cdot}^{(\lambda)}(h_1)$ and $p_{M, \cdot}^{(\lambda)}(h_2)$ by the last mortal link. } Lemma \ref{L:c_7c_8} establishes overlap of $p_{M-1, \cdot}^{(\lambda)}(h_1)$ and $p_{M-1, \cdot}^{(\lambda)}(h_2)$ over constant size intervals. The next lemma uses the final mortal link to establish overlap of $p_{M, \cdot}^{(\lambda)}(h_1)$ and $p_{M, \cdot}^{(\lambda)}(h_2)$ over specific values. 
\begin{lemma}\label{L:c_9}
	There exists a positive constant $c_9$ such that 
	\[  \inf_{j_{r+1}^{\ast} \in \mathcal{J}_{r+1}\cap \mathbb{Z}_+ }\,p_{M,j_{r+1}^{\ast}}^{(\lambda)}(h_1) \wedge p_{M,j_{r+1}^{\ast}}^{(\lambda)}(h_2) > \frac{c_8 c_9}{c_7\sqrt{M-1}}.\]
	for all  $r \in\Lambda^M$,  $M \in \left[\frac{c_1}{1-\lambda},\frac{c_2}{1-\lambda}\right]$ and $\lambda\in[\lambda_*,1)$.
\end{lemma}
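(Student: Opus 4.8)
The plan is to reattach the $M$-th mortal link and use it to transport the interval-level overlap from Lemma~\ref{L:c_7c_8} onto individual integer points. Writing $S_M(h_i) = S_{M-1}(h_i) + L^M_{h_i}$ with $L^M_{h_i}$ independent of $S_{M-1}(h_i)$, the point mass factorizes as a convolution,
\begin{equation*}
p_{M,j^{*}}^{(\lambda)}(h_i) = \sum_{k \in \mathbb{Z}_+} p_{M-1,k}^{(\lambda)}(h_i)\, \P\!\left(L^M_{h_i} = j^{*}-k\right), \qquad i \in \{1,2\}.
\end{equation*}
Fix $r \in \Lambda^M$ and an integer target $j^{*} \in \mathcal{J}_{r+1}\cap\mathbb{Z}_+$. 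I would discard every term except those with $k$ in the \emph{adjacent} left interval $\mathcal{J}_r$, so that the required jump $d := j^{*}-k$ of the last link is forced to be a positive integer of uniformly bounded size.

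First I would verify that $d = j^{*}-k$ ranges over a bounded set of positive integers, uniformly in $\lambda$, $M$ and $r$. Since $\mathcal{J}_{r+1}$ sits immediately to the right of $\mathcal{J}_r$ and both have common width $\sigma_1 c_7$, for $k \in \mathcal{J}_r$ and $j^{*} \in \mathcal{J}_{r+1}$ we have $d \in (0, 2\sigma_1 c_7)$; in particular $d \geq 1$ because $j^{*}$ and $k$ are integers with $j^{*} > k$. By Lemma~\ref{L:moments}, $\sigma_1 = \sigma(\lambda,h_1)$ is bounded above uniformly for $\lambda \in [\lambda_*,1]$, so $d$ lies in a fixed finite set $\{1,\dots,D\}$ with $D$ independent of $\lambda$, $M$ and $r$.

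Next I would obtain a uniform lower bound on the one-step jump probability. Using the explicit geometric mass function $\P(L^i_{h_i}=d) = (1-\eta)(1-\lambda\eta)(\lambda\eta)^{d-1}$ for $d \geq 1$, with $\eta = \eta(\lambda,h_i)$, together with the fact (established in Lemma~\ref{L:moments} via L'Hospital's rule) that $\eta(\lambda,h_i)$ is continuous on $[\lambda_*,1]$ and satisfies $\eta(\lambda,h_i)\to h_i/(1+h_i)\in(0,1)$ as $\lambda\nearrow 1$, the three factors $1-\eta$, $1-\lambda\eta$ and $\lambda\eta$ remain in a compact subinterval of $(0,1)$ uniformly in $\lambda\in[\lambda_*,1]$. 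Consequently there is a constant $c>0$ with $\P(L^M_{h_i}=d)\geq c$ for all $d\in\{1,\dots,D\}$, all $\lambda\in[\lambda_*,1)$ and both $i\in\{1,2\}$. This uniform non-degeneracy of the jump as $\lambda\nearrow 1$ is the main obstacle of the argument: everything hinges on $\lambda\eta(\lambda,h_i)$ staying bounded away from $1$ (so the geometric tail does not collapse) while the interval width $\sigma_1 c_7$, and hence $D$, stays bounded; both are exactly the uniform estimates supplied by Lemma~\ref{L:moments}.

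Finally I would assemble the estimate. Restricting the convolution to $k \in \mathcal{J}_r \cap \mathbb{Z}_+$ and inserting the jump lower bound gives, for each $i\in\{1,2\}$,
\begin{equation*}
p_{M,j^{*}}^{(\lambda)}(h_i) \;\geq\; c \sum_{k \in \mathcal{J}_r \cap \mathbb{Z}_+} p_{M-1,k}^{(\lambda)}(h_i) \;=\; c\, p_{M-1,\mathcal{J}_r}^{(\lambda)}(h_i) \;\geq\; \frac{c\,c_8}{\sqrt{M-1}},
\end{equation*}
where the last inequality is Lemma~\ref{L:c_7c_8} applied to the same index $r\in\Lambda^M$. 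Taking the minimum over $i\in\{1,2\}$ and choosing $c_9 := c\,c_7$ reproduces the asserted bound $\frac{c_8 c_9}{c_7\sqrt{M-1}}$ (with strictness coming from positivity of the masses), uniformly over $j^{*}\in\mathcal{J}_{r+1}\cap\mathbb{Z}_+$, $r \in \Lambda^M$, $M \in \left[\frac{c_1}{1-\lambda},\frac{c_2}{1-\lambda}\right]$ and $\lambda\in[\lambda_*,1)$, which is precisely Lemma~\ref{L:c_9}.
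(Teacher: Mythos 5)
Your proof is correct and follows essentially the same route as the paper's: decompose $p_{M,\cdot}^{(\lambda)}(h_i)$ as a convolution of $p_{M-1,\cdot}^{(\lambda)}(h_i)$ with the last mortal link's progeny, invoke Lemma~\ref{L:c_7c_8} on the adjacent interval $\mathcal{J}_r$, and lower-bound the last link's jump probability uniformly in $\lambda\in[\lambda_*,1)$ over a bounded set of integer jumps. The only (minor) difference is that the paper pigeonholes a single heavy integer $j_r^{(1)}\in\mathcal{J}_r$ and keeps just that one convolution term, whereas you retain the full sum over $\mathcal{J}_r\cap\mathbb{Z}_+$; this is slightly cleaner and sidesteps the paper's loose statement that $\mathcal{J}_r$ contains $c_7$ integers (its width is really $\sigma_1 c_7$, which your constant $D$ handles via the uniform upper bound on $\sigma_1$ from Lemma~\ref{L:moments}).
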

\begin{proof}
By Lemma \ref{L:c_7c_8}, 
$\mathcal{J}_r$ contains
at least one integer, say $j_r^{(1)}$, with mass at least
$\frac{c_8}{c_7 \sqrt{M-1}}$
under the probability measure  $p_{M-1, \,\cdot}^{(\lambda)}(h_1)$. This is because there are $c_7$ integers in $\mathcal{J}_r$.
Similarly, there exists $j_{r}^{(2)}$ with mass at least $\frac{c_8}{c_7 \sqrt{M-1}}$ under $m_{F_{M-1}^{(\lambda)}(h_2)}$. Hence
\[ p_{M-1, \,j_{r}^{(1)}}^{(\lambda)}(h_1) \wedge p_{M-1, \,j_{r}^{(2)}}^{(\lambda)}(h_2)\geq \frac{c_8}{c_7 \sqrt{M-1}}.\]

Let $j_{r+1}^*$ be an arbitrary 
integer in $\mathcal{J}_{r+1}$. 
The progeny of the $M$-th mortal link
has positive probability, say $c_9$, over integers in $[0,2 c_7]$,  uniformly over $\lambda\in [0,1]$. It follows that
\begin{equation*}\label{MatchLastTerm}
p_{M,j_{r+1}^*}^{(\lambda)}(h_1)=\sum_{k=0}^{j_{r+1}^*}p_{M-1,k}^{(\lambda)}(h_1)\,p_{1,j_{r+1}^*-k}^{(\lambda)}(h_1) > p_{M-1,j_r^{(1)}}^{(\lambda)}(h_1)\,p_{1,j_{r+1}^*-j_r^{(1)}}^{(\lambda)}(h_1) \geq \frac{c_8 c_9}{c_7 \sqrt{M-1}}
\end{equation*}
and similar for $h_2$. The proof is complete.
\end{proof}

\noindent
{\bf Step 3. Putting everything together. }
Lemma \ref{L:c_9} implies the sum in~\eqref{S:eq2} is at least
a positive constant, uniformly in 
$M \in \left[\frac{c_1}{1-\lambda},\frac{c_2}{1-\lambda}\right]$
and $\lambda \in (\lambda_*,1)$, because that sum is 
\begin{align*}
&\,\sum_{\vec{y} \in \mathbb{Z}_{+}^2}\left[p_{M,y_1}^{(\lambda)}(h_1) \,p_{M,y_2}^{(\lambda)}(h_1) \right] \wedge \left[p_{M,y_1}^{(\lambda)}(h_2) \,p_{M,y_2}^{(\lambda)}(h_2) \right]\\
\geq\,&  \sum_{y_1\in \cup_{r\in \Lambda^M}\mathcal{J}_{r+1}\cap \mathbb{Z}_+,\;y_2\in \cup_{r\in \Lambda^M}\mathcal{J}_{r+1}\cap \mathbb{Z}_+ }\left[p_{M,y_1}^{(\lambda)}(h_1)  \wedge p_{M,y_1}^{(\lambda)}(h_2) \right]\,\cdot \,\left[p_{M,y_2}^{(\lambda)}(h_1)  \wedge p_{M,y_2}^{(\lambda)}(h_2) \right]\\
\geq\,& \left(\frac{c_8 c_9}{c_7 \sqrt{M-1}}\right)^2\,\Big|\{y_1\in \cup_{r\in \Lambda^M}\mathcal{J}_{r+1}\cap \mathbb{Z}_+,\;y_2\in \cup_{r\in \Lambda^M}\mathcal{J}_{r+1}\cap \mathbb{Z}_+\}\Big|\\
\geq\,&  \left(\frac{c_8 c_9}{c_7}\right)^2.
\end{align*}
The proof of \eqref{S:c0} and hence that of Theorem  \ref{T:Length} are complete.

\section*{Acknowledgments}

SR was supported by NSF grants DMS-1614242, CCF-1740707 (TRIPODS), DMS-1902892, and DMS-1916378, as well as a Simons Fellowship and a Vilas Associates Award. BL was supported by DMS-1614242, CCF-1740707 (TRIPODS), DMS-1902892 (to SR). WTF was supported by NSF grant DMS-1614242 (to SR) and DMS-1855417.

\bibliographystyle{alpha}
\bibliography{lengthbib}{}

\end{document}